\documentclass[12pt]{article}
\textheight19cm
\textwidth14cm
\usepackage{latexsym}
\usepackage{amsmath, amsthm}
\usepackage{amsfonts}
\usepackage{mathrsfs}
\newtheorem{prop}{Proposition}[section]
\newtheorem{lem}{Lemma}[section]
\newtheorem{thm}{Theorem}[section]
\newtheorem{cor}{Corollary}[section]

\newtheorem{defn}{Definition}[section]

\newcommand{\rank}{\mathrm{rank}\,}

\newcommand{\Rs}{\mathbb{R}}

\newcommand{\G}{\overline{G} }

\newcommand{\N}{\mbox{null}}
\newcommand{\bz}{{\bf 0} }

\newcommand{\beq}{ \begin{equation} }
\newcommand{\eeq}{ \end{equation} }
\newcommand{\bt}{ \begin{tabular} }
\newcommand{\et}{ \end{tabular} }

\begin{document}

\bibliographystyle{plain}
\title{On the Colin de Verdi\`{e}re Graph Number and Penny Graphs }
\vspace{0.3in}
        \author{ A. Y. Alfakih
  \thanks{E-mail: alfakih@uwindsor.ca}
  \\
          Department of Mathematics and Statistics \\
          University of Windsor \\
          Windsor, Ontario N9B 3P4 \\
          Canada
}

\date{ \today}
\maketitle

\noindent {\bf AMS classification:} 05C50, 05C10, 05C62, 15B48.

\noindent {\bf Keywords:} The Colin de Verdi\`{e}re graph number, penny graphs, 
Euclidean distance matrices.
\vspace{0.1in}

\begin{abstract}

The Colin de Verdi\`{e}re number of graph $G$, denoted by $\mu(G)$,
is  a spectral invariant of $G$ that is related to some of its 
topological properties. 
For example, $\mu(G) \leq 3$ iff $G$ is planar.   
A {\em penny graph} is the contact graph of equal-radii disks with 
disjoint interiors in the plane. In this note, we prove lower bounds 
on $\mu(G)$ when the complement $\G$ is a penny graph.  

\end{abstract}

\section{Introduction and the Main Result}

A {\em penny graph} is the contact graph of equal-radii disks with 
disjoint interiors in the plane. To be more precise, 
a graph $G$ on $n$ nodes is a penny graph if there exists a 
one-to-one correspondence between the nodes of $G$ and a set of $n$  
closed {\em unit-diameter} disks with disjoint interiors in the plane
such that two nodes of $G$ are adjacent if and only if their 
corresponding disks touch. For $i=1,\ldots,n$, let  
$p^i$ denote the center of the disk corresponding to node $i$. 
Then the set of points $\{ p^1, \ldots, p^n \}$ is called a
{\em realization} of $G$, and it immediately follows that  
the edge set of $G$ is given by 
\beq \label{defE} 
E(G)=\{ \{i,j\}: ||p^i-p^j||^2 = 1\}, 
\eeq
where $||x||$ denotes the Euclidean norm of $x$, i.e., $||x||=\sqrt{x^Tx}$. 
Moreover, the set of missing edges of $G$, or equivalently, the edge 
set of the complement graph $\G$, is given by  
\beq \label{defEb} 
E(\G)=\{ \{i,j\}: ||p^i-p^j||^2 > 1\}.  
\eeq
As a result, penny graphs are also called {\em minimum-distance graphs}. 

A graph is {\em planar} if it can be drawn in the plane with its 
edges intersecting at their nodes only.  
Evidently, penny graphs are planar. A planar graph is said to be
{\em outerplanar} if all of its nodes lie on the boundary of the
outer face of its drawing.   

Let $A_G$ denote the generalized adjacency matrix of graph $G$, where
the 1's are replaced by arbitrary positive numbers and the diagonal
entries are arbitrary. 
The Colin de Verdi\`{e}re number of $G$, denote by $\mu(G)$
\cite{col90,col93}, is more or less the maximum multiplicity of     
the second largest eigenvalue, under a certain nondegeneracy assumption,
 of $A_G$. The precise definition of $\mu(G)$ is given in 
Subsection \ref{secCdV} below. 
Surprisingly, $\mu(G)$ is related to several topological properties of
$G$. For example, $\mu(G)$ is minor monotone, i.e., 
if $H$ is a minor of $G$, then $\mu(H) \leq \mu(G)$.    
Also, the planarity of $G$ can be characterized in terms of $\mu(G)$
as given in the following theorem. 

\begin{thm}[Colin de Verdi\`{e}re \cite{col90}]  
Let $G$ be a connected graph. Then
\begin{enumerate}
\item $\mu(G) \leq 1$ iff $G$ is a path.
\item $\mu(G) \leq 2$ iff $G$ is outerplanar.
\item $\mu(G) \leq 3$ iff $G$ is planar.
\end{enumerate}
\end{thm}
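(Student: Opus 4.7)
The plan is to prove each item by combining a Kuratowski--Wagner style forbidden-minor characterization with the minor-monotonicity of $\mu$, supplemented by a direct upper bound in each case. First I would record the classical characterizations I intend to use: a connected graph is a path iff it has no $K_3$ and no $K_{1,3}$ minor; it is outerplanar iff it has no $K_4$ and no $K_{2,3}$ minor; and by Wagner's theorem it is planar iff it has no $K_5$ and no $K_{3,3}$ minor.

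For the \emph{only-if} directions I would directly compute $\mu$ on each of these small obstructions. By choosing an admissible matrix with suitably negative off-diagonal entries on edges and tuning the diagonal so the kernel has the intended dimension (and verifying the Strong Arnold Hypothesis, which is routine for graphs this small), one establishes $\mu(K_3) = \mu(K_{1,3}) = 2$, $\mu(K_4) = \mu(K_{2,3}) = 3$, and $\mu(K_5) = \mu(K_{3,3}) = 4$. Minor monotonicity then immediately rules out the offending minors whenever $\mu(G) \leq 1, 2, 3$, respectively, so $G$ must lie in the predicted class.

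For the \emph{if} directions I need matching upper bounds. Part~1 is the easiest: for a path, any admissible matrix is, after relabeling, tridiagonal, hence a Jacobi matrix with simple spectrum, so every eigenspace has dimension at most $1$. Part~2 I would reduce to Part~3 via the apex trick: adding a universal vertex to an outerplanar graph yields a planar graph, and the known relation between $\mu$ of a connected graph and its cone translates the planar bound $\mu \leq 3$ into the outerplanar bound $\mu \leq 2$. Part~3 is the substantive step, and I would follow Colin de Verdi\`{e}re's route: use the Koebe--Andreev--Thurston circle packing theorem to represent the planar graph on the sphere $S^2$, exhibit the three ambient coordinate functions (restricted to the packing) as null vectors of an extremal admissible matrix $M$, and then invoke the Strong Arnold Hypothesis to preclude a fourth independent null vector.

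The hard part will be the upper bound in Part~3. Even granting the circle-packing representation, passing from ``the kernel contains three explicit functions'' to ``the kernel has dimension at most three for every admissible $M$'' requires a delicate transversality argument: one must show that any additional null vector would induce a deformation of the circle packing incompatible with either the Strong Arnold Hypothesis or the combinatorics of the planar embedding. This is the technical heart of Colin de Verdi\`{e}re's original paper~\cite{col90}, and I would not expect to bypass it.
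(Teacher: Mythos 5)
First, a point of reference: the paper does not prove this theorem at all --- it is quoted from \cite{col90} as known background --- so there is no in-paper argument to compare yours against, and your proposal has to stand on its own. The skeleton for the easy halves is sound. The forbidden-minor characterizations you invoke (no $K_3$ or $K_{1,3}$ minor for connected paths; no $K_4$ or $K_{2,3}$ minor for outerplanarity; Wagner's theorem for planarity), together with minor monotonicity and the values $\mu(K_3)=\mu(K_{1,3})=2$, $\mu(K_4)=\mu(K_{2,3})=3$, $\mu(K_5)=\mu(K_{3,3})=4$, do yield the implications $\mu(G)\le k\Rightarrow G$ lies in the corresponding class. The tridiagonal (Jacobi matrix, simple spectrum) argument for paths is correct, and the apex reduction of the outerplanar bound to the planar bound is legitimate for connected graphs --- though you should flag that $\mu(G\vee K_1)=\mu(G)+1$ has exceptions (e.g.\ $G=\overline{K_2}$, whose cone is $P_3$ with $\mu=1$); these exceptions are harmless under the connectivity hypothesis but must be ruled out explicitly.

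The genuine gap is the upper bound $\mu(G)\le 3$ for planar $G$, and the sketch you give for it points at the wrong tool. A Koebe--Andreev--Thurston circle packing produces \emph{one} admissible matrix whose kernel contains the three coordinate functions of the sphere; that is the construction behind lower bounds of the form $\mu(G)\ge 3$ for ($3$-connected) planar graphs, and in the Kotlov--Lov\'asz--Vempala analysis of $\mu(\G)$. Verifying SAP for that particular matrix says nothing about the corank of an \emph{arbitrary} admissible matrix $M$, which is what the upper bound requires. The actual proofs proceed quite differently: Colin de Verdi\`ere's original argument approximates discrete $M$ by Schr\"odinger operators on $S^2$ and invokes Cheng's theorem bounding the multiplicity of the second eigenvalue there, while van der Holst's later combinatorial proof uses his lemma that for a kernel vector with minimal support the positive and negative supports induce connected subgraphs, combined with a planarity/Euler-formula counting argument to exclude corank $\ge 4$. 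As written, your plan establishes everything except the one implication that carries the content of the theorem, and the ``delicate transversality argument'' you defer to is not a refinement of your circle-packing setup but a different proof altogether.
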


The results on $\mu(G)$ which are most relevant to this note
are given in the following two theorems.

\begin{thm}[Kotlov et al \cite{klv97}]  
Let $G$ be a graph on $n$ nodes. Assume that $G$ has no twins, i.e.,
no two nodes with same set of neighbors. Then  
\begin{enumerate}
\item if $\mu(\G) \geq n-3$, then $G$ is outerplanar.
\item if $\mu(\G) \geq n-4$, then $G$ is planar.
\end{enumerate}
\end{thm}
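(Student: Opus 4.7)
The plan is to translate the spectral hypothesis $\mu(\overline{G}) \geq n-d$ into a geometric representation of $G$ by spheres in $\Rs^{d-1}$, and then invoke Theorem~1.1 via a topological argument.

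\textbf{Step 1 (algebraic extraction).} By the definition of $\mu$, the hypothesis furnishes a symmetric $n\times n$ matrix $M$ with $M_{ij}<0$ on $E(\overline{G})$, $M_{ij}=0$ whenever $i\neq j$ and $ij\in E(G)$, having exactly one negative eigenvalue, with $\dim\ker M \geq n-d$, and obeying the Strong Arnold Property. Hence $\rank M\leq d$ with signature $(p,1)$ on its image, $p\leq d-1$, and $M$ admits a decomposition
\[
M=-xx^{T}+YY^{T},\qquad x\in\Rs^{n},\; Y\in\Rs^{n\times(d-1)}.
\]

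\textbf{Step 2 (orthogonal sphere representation).} Let $y_{i}\in\Rs^{d-1}$ denote the rows of $Y$. After adjusting the unconstrained diagonal of $M$ so that every $x_{i}>0$ and every $M_{ii}>0$, set $u_{i}=y_{i}/x_{i}$ and $r_{i}=\sqrt{M_{ii}}/x_{i}$. A direct expansion of $\|u_{i}-u_{j}\|^{2}$ shows that the spheres $S_{i}=\{z\in\Rs^{d-1}:\|z-u_{i}\|=r_{i}\}$ satisfy, for $i\neq j$,
\[
ij\in E(G) \iff S_{i}\perp S_{j},\qquad ij\in E(\overline{G}) \iff S_{i}\cap S_{j}=\emptyset,
\]
so that $G$ is the orthogonality graph of a sphere packing in $\Rs^{d-1}$. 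The twin-free hypothesis prevents two spheres from coinciding.

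\textbf{Step 3 (topological conclusion).} For Part~2 ($d=4$), we obtain an orthogonal-sphere representation of $G$ in $\Rs^{3}$; by inversive geometry this converts to a tangent-disk representation of $G$ in the plane, whence $G$ is planar. For Part~1 ($d=3$), the analogous orthogonal-circle representation in $\Rs^{2}$ yields, after an inversion fixing a suitable circle, a tangent-arc representation along a single circle, witnessing the outerplanarity of $G$.

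\textbf{Main obstacle.} The hardest point will be \emph{Step~2}: arranging the diagonal of $M$ and the signs of the $x_{i}$ so that the construction delivers bona fide positive radii and the correct disjoint/orthogonal pattern of spheres. This is where the Strong Arnold Property enters---it grants the genericity needed to perturb $M$ inside the Colin de Verdi\`{e}re class without disturbing its off-diagonal zero pattern---while the twin-free hypothesis is what forces the sphere centers $u_{i}$ to remain distinct. Once Step~2 is secured, Step~3 is a standard consequence of the classical equivalence between sphere/disk packing representations and planarity.
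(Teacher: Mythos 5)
First, a point of reference: the paper does not prove this statement. It is Theorem 1.2, quoted from Kotlov, Lov\'{a}sz and Vempala \cite{klv97} as background for the main result, so there is no in-paper proof to compare yours against. Your outline does capture the genuine strategy of \cite{klv97} (convert a corank-$(n-d)$ Colin de Verdi\`{e}re matrix of $\overline{G}$ into a low-dimensional sphere/cap representation of $G$, then argue topologically), but as written it has concrete gaps at each of its three steps.

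(i) In Step 2 the expansion gives, for $i\neq j$, $\|u_i-u_j\|^2=r_i^2+r_j^2-2M_{ij}/(x_ix_j)$, so edges of $G$ (where $M_{ij}=0$) do give orthogonal spheres, but edges of $\overline{G}$ (where $M_{ij}<0$) only give $\|u_i-u_j\|^2>r_i^2+r_j^2$, which does \emph{not} imply $S_i\cap S_j=\emptyset$: disjointness would require $\|u_i-u_j\|>r_i+r_j$, and $r_i^2+r_j^2<(r_i+r_j)^2$. The correct conclusion is only ``disjoint or crossing at an obtuse angle,'' and it is precisely this weaker conclusion that makes the topological step nontrivial. (ii) You cannot ``adjust the unconstrained diagonal'' of $M$: the diagonal is free in the definition of a $\overline{G}$-matrix, but once $M$ is required to satisfy M2, have corank $n-d$, and satisfy the Strong Arnold Property, perturbing $M_{ii}$ destroys these properties. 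Positivity of $x$ comes instead from Perron--Frobenius applied to $-M$ (which has nonnegative off-diagonal entries) on the components of $\overline{G}$, and the degenerate possibility $M_{ii}\le 0$ (imaginary radius, i.e.\ $\|u_i\|\le 1$) must be excluded or handled separately; this is where the twin-free hypothesis and the finer structure of $M$ actually do work in \cite{klv97}. (iii) Step 3 is not a ``standard consequence'': an orthogonal-sphere representation cannot be turned into a tangent-disk packing by an inversion, since orthogonality and tangency are inequivalent incidence relations under the M\"{o}bius group, and in any case the Koebe packing theorem runs in the opposite direction (planarity implies the existence of a packing). Extracting planarity, respectively outerplanarity, from the cap arrangement is the substantive topological argument occupying several pages of \cite{klv97}. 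In short, the skeleton is right, but the three points you defer are exactly where the theorem lives.
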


\begin{thm}[Kotlov et al \cite{klv97}] \label{klvthm} 
Let $G$ be a graph on $n$ nodes. Then 
\begin{enumerate}
\item if $G$ is a path, or a disjoint union of paths, 
                                   then $\mu(\G) \geq n-3$.
\item if $G$ is outerplanar, then $\mu(\G) \geq n-4$.
\item if $G$ is planar, then $\mu(\G) \geq n-5$.
\end{enumerate}
\end{thm}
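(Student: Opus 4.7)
My strategy is uniform across the three parts: for each hypothesis on $G$, I construct an explicit witness matrix $M$ in the class defining $\mu(\G)$ whose corank realizes the target bound. Such an $M$ must be symmetric with $M_{ij}<0$ on edges of $\G$ (equivalently, non-edges of $G$), $M_{ij}=0$ on non-edges of $\G$ with $i\neq j$ (equivalently, edges of $G$), arbitrary diagonal, exactly one negative eigenvalue, and the Strong Arnold Property (SAP).

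The realizations come from disk packings of $G$. For part~1, I place a disjoint union of paths on a line as a row of unit disks with successive centers at unit distance and gaps between components; the disks then form a $1$-parameter family. For part~2, I invoke a Koebe-type theorem to realize an outerplanar $G$ as a set of interior-disjoint disks all tangent to a fixed line, so that centers lie in a half-plane and, combined with varying radii, the disks form a $2$-parameter family. For part~3, the Koebe--Andreev--Thurston circle-packing theorem realizes a planar $G$ as a general interior-disjoint disk packing in $\Rs^2$, a $3$-parameter family.

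With centers $c_i$ and radii $r_i$ from such a realization, I set, up to positive diagonal scaling,
\[
M_{ij} = (r_i+r_j)^2 - ||c_i-c_j||^2 \qquad (i \neq j),
\]
and leave $M_{ii}$ as free parameters. External tangency forces $M_{ij}=0$ and disjointness forces $M_{ij}<0$, matching the required sign pattern. Expanding, $M = 2AA^T - b\mathbf{1}^T - \mathbf{1} b^T + \Diag(\alpha)$, where $A$ has rows $(c_i,r_i)$ and $b_i = ||c_i||^2 - r_i^2$. In each case $A$ lies in a subspace of dimension $k \in \{1,2,3\}$, so $\rank(M) \leq k+2$ and hence $n-\rank(M) \geq n-k-2$, giving the bounds $n-3$, $n-4$, $n-5$. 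Since $2AA^T$ is positive semidefinite while $-b\mathbf{1}^T-\mathbf{1} b^T$ has signature $(1,1)$ when $b$ and $\mathbf{1}$ are linearly independent, standard inertia additivity shows that $M$ has exactly one negative eigenvalue, with $\alpha$ chosen so as not to introduce new negatives.

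The main obstacle will be the Strong Arnold Property: I must show that the only symmetric $X$ with $X_{ii}=0$, $X_{ij}=0$ for $\{i,j\}\in E(\G)$, and $MX=0$ is $X=0$; such an $X$ is supported on the edges of $G$. For part~1, the collinear configuration reduces this to a Vandermonde-type linear-algebra check. For parts~2 and~3, I expect to argue via the rigidity of Koebe packings: infinitesimal deformations of the packing span the appropriate tangent space, and after a generic perturbation of the realization within the space of admissible packings, SAP holds. This rigidity/transversality step is where the real work lies.
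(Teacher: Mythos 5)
The paper does not actually prove this statement; it is quoted from Kotlov, Lov\'asz and Vempala \cite{klv97}, so your attempt can only be compared with the literature and with the analogous argument the paper gives for its own Theorem \ref{mainthm}. Your strategy --- realize $G$ as a tangency packing of disks, set $M_{ij}=(r_i+r_j)^2-\|c_i-c_j\|^2$, and read the corank off the low-rank factorization $M=2AA^T-be^T-eb^T$ --- is the same circle of ideas as both \cite{klv97} and the paper's penny-graph argument (where all $r_i=1/2$, so $M$ is exactly $E-D$ for the Euclidean distance matrix $D$ of the centers). So the approach is the right one, but as written it has genuine gaps.

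First, the diagonal is not free: the bound $\rank(M)\le k+2$ holds only for the specific choice $M_{ii}=4r_i^2$, i.e.\ $\alpha=0$; a nonzero $\Diag(\alpha)$ generically restores full rank, so the later phrase ``with $\alpha$ chosen so as not to introduce new negatives'' has nothing to choose. (Also, in part 1 the rows $(x_i,0,1/2)$ span a linear subspace of dimension $2$, not $1$; to recover $\rank(M)\le 3$ you must count affine dimension and absorb the constant-radius direction into the $e$-terms.) Second, the number of negative eigenvalues is subadditive, not additive: your decomposition only yields \emph{at most} one negative eigenvalue, and to get exactly one you must separately rule out that $M$ is positive semidefinite --- the paper does this for pennies via Lemma \ref{rholem} and part 4 of Theorem \ref{sEDM}, and your sketch contains no substitute. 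Third, and most seriously, the Strong Arnold Property is announced rather than proved. ``Generic perturbation within the space of admissible packings'' is delicate precisely because every tangency (each forced zero $M_{ij}=0$ for $\{i,j\}\in E(G)$) must be preserved, and establishing SAP for the Koebe matrix is the technical heart of \cite{klv97}; note that the paper's own SAP argument replaces this by a concrete mechanism (3-degeneracy of penny graphs plus Lemma \ref{3lilem} on the linear independence of any three columns of $D$), and some analogue of that is what your outline is missing. Finally, $\mu$ of a disconnected $\G$ is defined componentwise, so exhibiting a witness matrix does not by itself give the bound when $\G$ is disconnected; compare the paper's use of Lemma \ref{Gblem}.
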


The following theorem is the main result of this note.

\begin{thm} \label{mainthm} 
Let $G$ be a penny graph on $n$ nodes, where $n \geq 5$.  Then  
\begin{enumerate}
\item if $G$ is a path, a disjoint union of paths, or a cycle,  
then $\mu(\G) \geq  n-3$.   
\item Otherwise, $\mu(\G) \geq  n-4$.   
\end{enumerate}
\end{thm}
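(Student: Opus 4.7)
The plan is to construct, from a planar realization of the penny graph, an explicit symmetric matrix $M$ that witnesses the desired lower bound on $\mu(\G)$, and then to control its corank via the low-dimensional (Euclidean distance matrix) structure of the realization. Let $p^1,\dots,p^n$ be a realization of $G$, let $P\in\Rs^{2\times n}$ have columns $p^i$, and set $u_i=\|p^i\|^2-\tfrac12$. Define
\[
M_{ij}=1-\|p^i-p^j\|^2 \text{ for } i\ne j,\qquad M_{ii}=1.
\]
By (\ref{defE}) and (\ref{defEb}), $M_{ij}=0$ on edges of $G$ and $M_{ij}<0$ on edges of $\G$, so $M$ has the sign pattern required for a Colin de Verdi\`ere matrix of $\G$. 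Expanding $\|p^i-p^j\|^2$ gives the factorization
\[
M \;=\; 2P^TP-ue^T-eu^T \;=\; QKQ^T,\qquad Q=[P^T\mid u\mid e]\in\Rs^{n\times 4},
\]
where $K=\mathrm{diag}(2I_2,K_0)$ with $K_0=\bigl(\begin{smallmatrix}0&-1\\-1&0\end{smallmatrix}\bigr)$; the $4\times 4$ matrix $K$ has inertia $(3,1,0)$.

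Part 2 follows immediately: $\rank M\le\rank Q\le 4$, so $\mathrm{corank}\,M\ge n-4$, and by Sylvester's law $M$ has at most one negative eigenvalue (exactly one when $Q$ has full column rank $4$, which holds for a generic realization whenever the points are neither collinear nor concyclic). Hence $M$ is a Colin de Verdi\`ere witness of corank $\ge n-4$ and $\mu(\G)\ge n-4$.

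For Part 1 I choose a realization that drops $\rank Q$ to $3$. If $G$ is a path or disjoint union of paths, I place the $p^i$ on a line; the second row of $P$ vanishes, forcing $\rank Q\le 3$ and $\mathrm{corank}\,M\ge n-3$ (this sub-case is in any case already covered by Theorem \ref{klvthm}). If $G=C_n$ with $n\ge 5$, I realize $G$ as the regular unit-side $n$-gon centered at the origin; then $\|p^i\|\equiv r=1/(2\sin(\pi/n))$, so $u$ is a scalar multiple of $e$ and $\rank Q\le 3$. A short congruence computation reduces the effective $3\times 3$ middle matrix to $\mathrm{diag}(2,2,-(2r^2-1))$, whose inertia is $(2,1,0)$ precisely when $r>1/\sqrt 2$, and $1/(2\sin(\pi/n))>1/\sqrt 2$ is equivalent to $n\ge 5$---this is where the hypothesis enters. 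Thus $\mathrm{corank}\,M\ge n-3$ with exactly one negative eigenvalue, yielding $\mu(\G)\ge n-3$.

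The remaining technicality is the Strong Arnold Property for $M$. For the line and regular-polygon realizations it reduces to a local system at each vertex: a symmetric $X$ supported on $E(G)$ with zero diagonal and each column in $\N(Q^T)$ must satisfy $\sum_{i\sim j}X_{ij}=0$ and $\sum_{i\sim j}X_{ij}p^i=0$ at every $j$. On a path the first equation telescopes from the two endpoints inward to force all edge weights to zero, and on a regular $C_n$ with $n\ge 5$ it forces $X_{i,i+1}=(-1)^i c$, after which the position equation collapses to $c=0$ since alternate vertices of the regular polygon are distinct. For the general case of Part 2 one invokes the standard fact that SAP holds after an arbitrarily small perturbation of $M$ within the affine space of matrices having the correct sign pattern and inertia, without decreasing the corank. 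Verifying SAP rigorously in the general Part 2 setting is the main technical obstacle; by contrast, the rank and inertia assertions themselves are transparent consequences of the EDM-type factorization $M=QKQ^T$.
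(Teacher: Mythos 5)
Your witness matrix is exactly the paper's $M=E-D$, where $D$ is the EDM of the realization, and your factorization $M=QKQ^T$ with $Q=[P^T\mid u\mid e]$ is a clean, essentially equivalent route to M1, M2 and the rank counts: $\rank Q\in\{3,4\}$ is precisely the spherical/nonspherical dichotomy of Theorems \ref{sEDM} and \ref{nsEDM}, and your use of $r>1/\sqrt2$ for the regular $n$-gon is the paper's Lemma \ref{rholem}. Two smaller omissions: for Part 2 you should justify that a realization with $\rank Q=4$ exists (it does: if $n\ge 5$ points of a penny realization were concyclic or collinear, each point would have at most two unit-distance neighbours among them, so $G$ would be a union of paths or a cycle); and you ignore the case where $\G$ is disconnected, where $\mu(\G)$ is defined as a maximum over components and a separate short argument is needed (the paper's Lemma \ref{Gblem} shows this occurs only for $n=5,6,7$ and reduces to a spherical EDM on $n-1$ points).

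The genuine gap is the Strong Arnold Property in Part 2, and you have correctly identified it as the obstacle but not closed it. The ``standard fact'' you invoke --- that SAP can be restored by an arbitrarily small perturbation within the affine space of well-signed $\G$-matrices of the right inertia \emph{without decreasing the corank} --- is false. If it were true, condition M3 would never be an active constraint and could be dropped from Definition \ref{defCdV}; in reality a perturbation that restores transversality generically destroys the corank, which is exactly the degeneracy SAP is designed to exclude. The paper closes this gap with an argument specific to penny graphs that your proposal lacks: from $MX=\bz$ one deduces $DX=\bz$ (Corollary \ref{NDNMcor}, valid for every realization with $n\ge5$ because any concyclic penny configuration has $\rho^2>1/2$); then column $i$ of $DX$ gives a vanishing linear combination of the columns $D_j$ over the neighbours $j$ of $i$ in $G$. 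Since any $3$ columns of an EDM with positive off-diagonal entries are linearly independent (Lemma \ref{3lilem}) and penny graphs are $3$-degenerate (Propositions \ref{prop1} and \ref{prop2}), one can zero out all entries of $X$ by repeatedly eliminating a vertex of degree at most $3$. Your telescoping arguments for paths and cycles are special cases of this elimination; to complete Part 2 you need the degeneracy argument (or some substitute) for an arbitrary penny graph and an arbitrary nonspherical realization, not a perturbation appeal.
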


It is interesting to contrast Theorems \ref{klvthm} and \ref{mainthm}
since penny graphs are planar and since paths and cycles are trivially
outerplanar. Furthermore, the following remark is worth pointing out.
Let $K_n$ denote the complete graph on $n$ nodes and assume that $n \geq 3$. 
Then $\mu(K_n)= n-1$ and $\mu(G) \leq n-2$ for any graph $G$ on $n$ nodes
that is different from $K_n$ \cite{hls99}. 

The proof of Theorem \ref{mainthm}, which is based on the theory of
Euclidean distance matrices, is presented in Section 3. The  
necessary background for the proof is presented in Section 2.

\section{Preliminaries}

We begin this section by collecting the notation used throughout this note.
$E(G)$ denotes the edge set of graph $G$, while 
$E(\G)$ denotes the edge set of the
complement graph $\G$. We denote by $e$ and $E$, respectively, the 
$n$-vector and the $n \times n$ matrix of all 1's. 
$\bz$ denotes the zero vector or the zero matrix of appropriate dimension.
 $\N(A)$ denotes
the null space of matrix $A$ and $A_j$ denotes the $j$th column of $A$. 
Finally, $||x||$ denotes the Euclidean norm of $x$. 

\subsection{The Colin de Verdi\`{e}re number of graphs} 
\label{secCdV}

The {\em corank} of a matrix $M$ is the dimension of its null space.
Let $G$ be an undirected graph on $n$ nodes. A {\em $G$-matrix $A$} is 
an $n \times n$ symmetric matrix such that $a_{ij}=0$ for all  
$\{i,j\} \in E(\G)$. 
In addition, if $a_{ij} <0$ for all $\{i,j\} \in E(G)$, then  
$A$ is said to be {\em well signed}. 
Note that there is no condition on the diagonal entries of $A$. 

\begin{defn} \label{defCdV} 
Let $G$ be a connected graph on $n$ nodes. The 
Colin de Verdi\`{e}re number of $G$, denoted by $\mu(G)$, is the  
the maximum corank of an $n \times n$ matrix $M$ that satisfies the
following conditions: 
\begin{description} 
\item[M1:] $M$ is a well-signed $G$-matrix. 
\item[M2:] $M$ has exactly one negative eigenvalue. 
\item[M3:] There does not exist a nonzero $\G$-matrix $X$ whose diagonal
entries are all 0's such that $MX=\bz$. 
\end{description} 
\end{defn}
Condition M3 \cite{hls95} is one of several 
equivalent formulations of the {Strong Arnold Property (SAP)}.  
Any matrix that satisfies Conditions M1, M2 and M3 is called a  
{\em Colin de Verdi\`{e}re matrix} of graph $G$.   
A Colin de Verdi\`{e}re matrix $M$ of $G$ such that corank ($M$) = $\mu(G)$   
is called {\em optimal}.

The definition of $\mu(G)$ can be extended to disconnected graphs \cite{hls99}. 
Let $G_1, \ldots, G_k$ be the connected components of $G$ and assume
that $G$ has at least one edge. Then 
\[
\mu(G) = \max \{ \mu(G_1), \dots, \mu(G_k)\}. 
\] 

It is easy to show that $\mu(K_1)=0$ and $\mu(\overline{K_n}) = 1$ 
if $n \geq 2$. Also $\mu(K_n)=n-1$ and for any graph $G$ on $n$ nodes
($n \geq 3$) such that $G \neq K_n$, we have $\mu(G) \leq n-2$. Note that 
$\mu(K_2)=\mu(\overline{K_2})=1$.  
For a comprehensive survey of $\mu(G)$, see the paper \cite{hls99}
and the recent book \cite{lov18}.  

\subsection{Penny Graphs}

As was mentioned earlier, penny graphs are obviously planar. 
Whereas a planar graph
on $n$ nodes can have at most $3n-6$ edges, a penny graph can have at most
$3n-\sqrt{12n-3}$ edges \cite{har74}. Furthermore,
unlike planar graphs which can be recognized in linear 
time \cite{ht74}, the problem of recognizing penny graphs 
is NP-hard \cite{ew96}.
      
Evidently, the maximum degree of any node of a penny graph
is 6 since the kissing number in the plane is 6. Also, it is easy
to show that a penny graph cannot have $K_4$ or $K_{2,3}$ as subgraphs. 
In the remainder of this subsection, we present some properties
of penny graphs which are relevant to this note.

Let $k$ be a nonnegative integer. 
A graph $G$ is said to be {\em $k$-degenerate}
if every induced subgraph of $G$ has a node of degree at most $k$. 
The following are easy observations. 

\begin{prop}[\cite{lw70}] \label{prop1}  
Let $G$ be a graph on $n$ nodes. Then 
$G$ is $k$-degenerate if and only
if the nodes of $G$ can be ordered, say $v_1,\ldots,v_n$, such that
for all $i=1,\ldots,n$, 
the degree of $v_i$ in the subgraph induced by nodes $\{v_i,\ldots,v_n\}$ is 
at most $k$.  In other words, graph $G$ is $k$-degenerate iff
it can be reduced to a single node
by the successive removal of nodes of degree at most $k$. 
\end{prop}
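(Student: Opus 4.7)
The plan is to establish the two directions of the equivalence separately, each by a direct appeal to the definition of $k$-degeneracy. Both implications amount to bookkeeping, and I do not anticipate any delicate step; the only fact one must keep in mind is that the defining property of $k$-degeneracy is quantified over \emph{all} induced subgraphs, so the class is closed under taking induced subgraphs. This closure property is really the only conceptual content of the argument.

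For the forward direction, suppose $G$ is $k$-degenerate. Since $G$ is itself an induced subgraph of $G$, it has a node of degree at most $k$; label it $v_1$. The graph $G - v_1$ is again $k$-degenerate, because every induced subgraph of $G - v_1$ is an induced subgraph of $G$. Hence $G - v_1$ has a node of degree at most $k$; call it $v_2$. Iterating this peeling procedure, after $n$ steps one obtains an ordering $v_1, \ldots, v_n$ of $V(G)$ such that, for each $i$, the degree of $v_i$ in the subgraph induced by $\{v_i, \ldots, v_n\}$ is at most $k$, as desired.

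For the reverse direction, suppose such an ordering exists, and let $H$ be an arbitrary induced subgraph of $G$ with node set $S \subseteq V(G)$. Let $i$ be the smallest index with $v_i \in S$, so that $S \subseteq \{v_i, v_{i+1}, \ldots, v_n\}$. Then the degree of $v_i$ in $H$ is at most its degree in the subgraph induced by $\{v_i, \ldots, v_n\}$, which by hypothesis is at most $k$. Thus $H$ contains a node of degree at most $k$, so $G$ is $k$-degenerate. The second phrasing in the proposition --- that $G$ is $k$-degenerate iff it can be reduced to a single node by successively removing nodes of degree at most $k$ --- is then just a rereading of the peeling construction used in the forward direction.
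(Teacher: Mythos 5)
Your proof is correct. The paper states this proposition without proof, citing Lick and White and calling it an easy observation, so there is nothing to compare against; your two-direction argument --- peeling off minimum-degree vertices for the forward implication, and locating the earliest-indexed vertex of an arbitrary induced subgraph for the converse --- is exactly the standard proof, and the key closure observation (that $k$-degeneracy is inherited by induced subgraphs) is correctly identified and used.
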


\begin{prop}[\cite{epp18}]  \label{prop2} 
Let $G$ be a penny graph. Then $G$ is $3$-degenerate.
\end{prop}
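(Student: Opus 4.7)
The plan is to apply the characterization of $k$-degeneracy from Proposition \ref{prop1}, which reduces the task to producing an ordering $v_1,\ldots,v_n$ of the nodes of $G$ such that $v_i$ has degree at most $3$ in the subgraph induced by $\{v_i,\ldots,v_n\}$. To build such an ordering greedily, I only need two ingredients: (i) the class of penny graphs is closed under taking induced subgraphs, and (ii) every nonempty penny graph has a node of degree at most $3$. Ingredient (i) is immediate: if $\{p^1,\ldots,p^n\}$ is a realization of $G$ and $S\subseteq\{1,\ldots,n\}$, then $\{p^i : i\in S\}$ is a realization of the subgraph of $G$ induced by $S$, since the defining conditions \eqref{defE} and \eqref{defEb} survive restriction. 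So the whole proof reduces to ingredient (ii), and then one produces $v_1$ from $G$, removes it, produces $v_2$ from the remaining penny graph, and so on.

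To prove ingredient (ii), I would take $v$ to be any node of $G$ whose center $p^v$ is a vertex (extreme point) of the convex hull of $\{p^1,\ldots,p^n\}$. The essential geometric claim is that such a $v$ has degree at most $3$ in $G$. Because $p^v$ is an extreme point, there is a line $\ell$ through $p^v$ such that every other center lies strictly on one side of $\ell$; in particular, viewed from $p^v$, the neighbors of $v$ occupy angular positions inside an open half-plane, i.e.\ inside an open arc of length $\pi$ on the unit circle around $p^v$.

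The second geometric input is a $60^\circ$ separation between any two neighbors. If $u$ and $w$ are both neighbors of $v$, then $\|p^u-p^v\|=\|p^w-p^v\|=1$ by \eqref{defE}, while $\|p^u-p^w\|\geq 1$ by \eqref{defE}--\eqref{defEb} (the disks for $u$ and $w$ have disjoint interiors). The law of cosines applied to the triangle $p^u p^v p^w$ then forces the angle at $p^v$ to satisfy $\cos\theta\leq 1/2$, i.e.\ $\theta\geq \pi/3$. Combining this with the previous paragraph: the angular positions of the neighbors of $v$ are distinct points in an open arc of length $\pi$, with pairwise gaps of at least $\pi/3$, so there can be at most $3$ of them. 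This gives $\deg_G(v)\leq 3$, completing ingredient (ii).

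I expect the only real content of the proof to be the geometric claim in ingredient (ii); the rest is bookkeeping. The one place to be slightly careful is the strict-versus-closed half-plane distinction: one needs that the supporting line at an extreme point can be chosen so that no other center lies on $\ell$, which is exactly why "vertex of the convex hull" (extreme point) is used rather than just "boundary point"; if the line were merely supporting with other points allowed on it, the open-arc-of-length-$\pi$ argument could admit a fourth neighbor diametrically opposite to another. Once this subtlety is handled, Proposition \ref{prop1} delivers $3$-degeneracy.
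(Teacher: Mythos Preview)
Your proposal is correct and follows exactly the approach sketched in the paper, which simply notes that each vertex of the convex hull of (any induced subgraph of) a penny graph has degree at most $3$. You have supplied the geometric details---the $\pi/3$ angular separation via the law of cosines and the strict open-half-plane argument at an extreme point---that the paper leaves to the reader.
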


Proposition \ref{prop2} follows since each vertex of the convex hull
of any subgraph of a penny graph has degree at most 3. 
The following technical lemmas easily follow from the geometry of penny graphs.

\begin{lem} \label{rholem}
Let $\{p^1, \ldots, p^n\}$ be a realization of a penny graph $G$; and
assume that $p^1, \ldots, p^n$ lie on a circle of radius $\rho$. 
If $n \geq 5$, then $\rho^2 > 1/2$.
\end{lem}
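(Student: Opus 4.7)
The plan is to exploit the chord-length formula on a circle together with a pigeonhole bound on the smallest arc between cyclically consecutive centers.

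First, I would note that since the penny-graph realization satisfies $\|p^i-p^j\|\geq 1$ for every $i\neq j$ (by \eqref{defE} and \eqref{defEb}), the points $p^1,\ldots,p^n$ are distinct, and hence can be listed in cyclic order around the circle of radius $\rho$. Call this ordering $q^1,\ldots,q^n$, and let $\theta_1,\ldots,\theta_n$ denote the central angles subtended by the consecutive arcs, so that $\theta_1+\cdots+\theta_n = 2\pi$.

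Next, I would use the chord-length identity $\|q^k-q^{k+1}\| = 2\rho\sin(\theta_k/2)$ together with the penny-graph constraint $\|q^k-q^{k+1}\|\geq 1$, to get $2\rho\sin(\theta_k/2)\geq 1$ for every $k$. By pigeonhole, some $\theta_{k^*}$ satisfies $\theta_{k^*}\leq 2\pi/n$. Since $n\geq 5$, we have $\theta_{k^*}/2 \leq \pi/n \leq \pi/5 < \pi/2$, so $\sin$ is strictly increasing on $[0,\theta_{k^*}/2]$ and the chord-length bound applied to $\theta_{k^*}$ yields
\[
1 \;\leq\; 2\rho\sin(\theta_{k^*}/2) \;\leq\; 2\rho\sin(\pi/n),
\]
so $\rho^2 \geq 1/(4\sin^2(\pi/n))$.

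Finally, since $n\geq 5$, $\sin(\pi/n)\leq\sin(\pi/5) < \sin(\pi/4) = 1/\sqrt{2}$, and therefore $\rho^2 > 1/2$. There is not really a hard step here; the only point requiring a bit of care is ensuring that $\theta_{k^*}/2$ lies in the range where $\sin$ is monotone, which is precisely where the hypothesis $n\geq 5$ is used (both to justify monotonicity and, more importantly, to push $\sin(\pi/n)$ strictly below $1/\sqrt 2$).
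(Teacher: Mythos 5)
Your proof is correct and follows essentially the same route as the paper: the paper asserts that the central angle $\theta$ of a touching (unit-chord) pair satisfies $\theta\leq 2\pi/5$ --- which is exactly your pigeonhole step on consecutive arcs, made explicit --- and then bounds $\rho^2\geq 1/(2(1-\cos\theta))$, which coincides with your bound $1/(4\sin^2(\theta/2))$ via the half-angle identity. Your write-up just spells out the cyclic ordering and monotonicity details that the paper leaves as ``obvious.''
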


\begin{proof}
Let $\theta$ be the central angle formed by two touching disks. Then
obviously, $\theta \leq 2 \pi/5$. Hence, 
\[ \rho^2 \geq \frac{1} {2(1-\cos \theta)} \geq 
\frac{1} { 2(1-\cos (2 \pi/5))} > \frac{1}{2}. 
\] 
\end{proof}

\begin{lem} \label{Gblem}
Let $G$ be a penny graph on $n$ nodes, where $n \geq 5$. If 
the complement graph $\G$ is not connected, then $n=5,6$ or $7$; and 
$G$ is realized by one disk touching, respectively, $4, 5$ or $6$ other disks. 
\end{lem}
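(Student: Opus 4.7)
The plan is to exploit two well-known structural restrictions on penny graphs that were already recalled in the paper: a penny graph contains no $K_{2,3}$ as a subgraph, and every node has degree at most $6$ (the planar kissing number).

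First, I would translate the disconnectedness of $\G$ into a statement about $G$. Since $\G$ is disconnected, the node set of $G$ admits a partition $V(G)=V_1\sqcup V_2$ with $V_1,V_2$ both nonempty such that no edge of $\G$ joins $V_1$ to $V_2$, i.e., every node of $V_1$ is adjacent in $G$ to every node of $V_2$. In other words, $G$ contains the complete bipartite graph on $(V_1,V_2)$ as a subgraph.

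Next, I would rule out the balanced cases. If both $|V_1|\geq 2$ and $|V_2|\geq 2$, then whenever one of the parts has size at least $3$, picking two nodes from the smaller part and three from the larger yields $K_{2,3}$ as a subgraph of $G$, which is impossible for a penny graph. The remaining sub-case is $|V_1|=|V_2|=2$, but this forces $n=4$, contradicting the hypothesis $n\geq 5$. Hence one of the parts must be a singleton; say $V_1=\{v\}$.

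Since $v$ is then adjacent in $G$ to every other node, the disk at $v$ touches all of the remaining $n-1$ disks. The kissing number in the plane being $6$ forces $n-1\leq 6$, so $n\in\{5,6,7\}$, and the realization is exactly the one described in the statement. No step looks like a real obstacle here; the only small point to be careful about is checking that the balanced case $|V_1|=|V_2|=2$ is excluded by the assumption $n\geq 5$ rather than by the $K_{2,3}$-free property.
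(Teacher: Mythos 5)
Your proof is correct and follows essentially the same route as the paper: rule out a large complete bipartite subgraph via the $K_{2,3}$-free property of penny graphs, conclude that one side of the split must be a single node, and then invoke the kissing number. The only (harmless) difference is that you bipartition the components of $\G$ at the outset, which lets you skip the paper's separate $K_4$ case for four or more components, and you correctly note that the $(2,2)$ case is excluded by $n\geq 5$ rather than by $K_{2,3}$-freeness.
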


\begin{proof}
Assume that $\G$ is not connected. If $\G$ has 4 or more connected
components, then it is easy to see that $G$ has a $K_4$ as a subgraph,
a contradiction. Similarly, if $\G$ has 3 connected
components, then  $G$ has a $K_{2,3}$ as a subgraph, also a contradiction. 
Hence, $\G$ has 2 connected components. Now if one of these components
has 2 or more nodes and the other component has 3 or more nodes, then  
 $G$ has a $K_{2,3}$ as a subgraph, again a contradiction. 
Therefore, $\G$ must have one isolated node, i.e.,
we must have one disk, corresponding to this isolated node,
touching the disks corresponding to all other nodes. 
The result follows since the kissing number in the plane is 6, and  
since $n \geq 5$. 
\end{proof}

\subsection{Euclidean Distance Matrices}

As was mentioned earlier, the proof of Theorem \ref{mainthm} is based
on the theory of Euclidean distance matrices (EDMs). 
In this subsection, we present the results of EDMs that are most relevant
to this note. 
For a comprehensive treatment see the monograph \cite{alf18m}.

An $n \times n$ matrix $D$ is called a {\em Euclidean distance matrix (EDM)}
if there exist points $p^1,\ldots,p^n$ in some Euclidean space such that
\[
d_{ij} = ||p^i - p^j||^2 \mbox{ for all } i,j=1,\ldots,n.
\] 
The points $p^1,\ldots,p^n$ are called the {\em generating points} of $D$ and 
the dimension of their affine span is called the {\em embedding dimension}
of $D$. Obviously, an EDM $D$ is symmetric with zero diagonal and nonnegative
offdiagonal entries.  Let $e$ denote the vector of all 1's in
$\Rs^n$, and let $V$ be an $n \times (n-1)$ matrix such that 
$Q=[e/\sqrt{n} \;\; V]$ is an $n \times n$ orthogonal matrix. Then 
we have the following well-known characterization of EDMs. 

\begin{thm} \label{EDMthm}[ \cite{gow85,sch35,yh38}] 
Let $D$ be an $n \times n$ symmetric 
matrix of zero diagonal. Then $D$ is an EDM if and only if $V^T (-D) V$ 
is positive semidefinite; in which case, the embedding dimension of $D$ 
is equal to the rank of $V^TDV$. 
\end{thm}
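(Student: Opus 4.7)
The plan is to build, for a planar realization $\{p^1,\ldots,p^n\}$ of $G$, a Colin de Verdi\`{e}re matrix for $\G$ whose corank is read off directly from the EDM theory of Subsection 2.3. Let $D$ be the EDM of the centers and set $M := E-D$. By \eqref{defE} and \eqref{defEb}, $m_{ij}=0$ whenever $\{i,j\}\in E(G)$ and $m_{ij}<0$ whenever $\{i,j\}\in E(\G)$, so $M$ is automatically a well-signed $\G$-matrix (condition M1), independent of the diagonal. A diagonal shift $M \leadsto M+\Diag(\delta)$ is kept in reserve in case the finer conditions M2 and M3 demand it.

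For condition M2 and the corank I would invoke Theorem \ref{EDMthm}: with $r\le 2$ the embedding dimension of $D$, the matrix $V^T M V = -V^T D V$ is positive semidefinite of rank $r$. Cauchy interlacing then forces $M$ to have at most one negative eigenvalue, and exactly one is produced by exhibiting a test vector with $x^T M x < 0$. Writing $\alpha_i = \|p^i\|^2$ and $P$ for the matrix whose rows are the $p^i$, the decomposition $D = \alpha e^T + e\alpha^T - 2PP^T$ gives
\[
\N(M)\cap e^\perp \;=\; \{\, y\in\Rs^n : y\perp e,\; P^Ty=0,\; \alpha^Ty=0\,\},
\]
whose dimension equals $n-1-r$ when $\alpha\in \mathrm{span}\{e,P_1,\ldots,P_r\}$ and $n-2-r$ otherwise.

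The two parts of the theorem correspond to choosing the realization so that this dimension is at least $n-3$ or $n-4$. For Part 1, realize a path or a disjoint union of paths collinearly: then $r=1$, $\alpha_i=x_i^2$ is not in $\mathrm{span}\{e,x\}$ because distinct $x_i$ cannot all satisfy a common quadratic, and the dimension equals $n-2-1=n-3$; realize a cycle cocircularly, so that $r=2$ and $\alpha=\rho^2 e$, giving dimension $n-1-2=n-3$. Lemma \ref{rholem} certifies that the cocircular realization is a valid penny graph for $n\ge 5$. For Part 2 any realization works: $r=2$ and generically $\alpha\notin\mathrm{span}\{e,P_1,P_2\}$, giving corank at least $n-2-2=n-4$.

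The main obstacle is condition M3, the strong Arnold property: ruling out a nonzero symmetric $X$ supported on $E(\G)$ with zero diagonal such that $MX=0$. I would argue columnwise, using the $3$-degeneracy of penny graphs (Proposition \ref{prop2}) to iteratively isolate a vertex and force the corresponding column of $X$ into a subspace of $\N(M)$ of codimension at least $4$; the inequality $\rho^2 > 1/2$ from Lemma \ref{rholem} should then rule out spurious cocircular obstructions in the Part 1 cycle case. The disconnected case is handled separately using Lemma \ref{Gblem}, which forces $n\in\{5,6,7\}$ and $G$ to contain a universal vertex, allowing direct verification.
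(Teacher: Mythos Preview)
Your proposal does not address the stated theorem. Theorem~\ref{EDMthm} is the classical Schoenberg--Gower--Young--Householder characterization of Euclidean distance matrices: a symmetric zero-diagonal matrix $D$ is an EDM iff $V^T(-D)V\succeq 0$, with embedding dimension equal to $\rank(V^TDV)$. The paper does not prove this result at all; it is quoted as background with citations \cite{gow85,sch35,yh38} and then used as a tool elsewhere. There is therefore no ``paper's own proof'' to compare against.

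What you have actually sketched is a proof of Theorem~\ref{mainthm}, the main result on $\mu(\G)$ for penny graphs $G$; you even invoke Theorem~\ref{EDMthm} as an ingredient in your argument. If your intent was genuinely to prove Theorem~\ref{EDMthm}, none of the machinery you bring in (Colin de Verdi\`{e}re matrices, conditions M1--M3, $3$-degeneracy, Lemmas~\ref{rholem} and~\ref{Gblem}) is relevant. A correct proof proceeds directly from the definition: if $p^1,\ldots,p^n$ generate $D$, then writing $\alpha_i=\|p^i\|^2$ and $P$ for the matrix with rows $(p^i)^T$ gives $D=\alpha e^T+e\alpha^T-2PP^T$, whence $V^T(-D)V=2(V^TP)(V^TP)^T\succeq 0$ with rank equal to $\rank(V^TP)$, which is precisely the affine dimension of $\{p^1,\ldots,p^n\}$. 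Conversely, if $V^T(-D)V\succeq 0$, factor $-\tfrac{1}{2}V^TDV=B^TB$ and take the columns of $BV^T$ as generating points.
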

That is, a symmetric matrix $D$ with zero diagonal is an EDM iff $D$
is negative semidefinite on $e^{\perp}$, the orthogonal complement of
$e$ in $\Rs^n$.
EDMs have the nice property that $e$ lies in the column space
of every nonzero EDM $D$ \cite{gow85}. i.e., for any EDM $D \neq \bz$, 
there exists a vector $w$ such that $Dw=e$. 

An EDM $D$ is said to be {\em spherical} if its generating points lie
on a sphere. Otherwise, it is said to be {\em nonspherical}. 
If the generating points of $D$ lie on a sphere of radius $\rho$,
we will refer to $\rho$ as the radius of $D$. 
Spherical and nonspherical EDMs have many different characterizations.
The most relevant for our purposes are those given in the following
two theorems.

\begin{thm} [\cite{gow82,gow85,neu81,thw96}] \label{sEDM}   
Let $D$ be a nonzero $n \times n$ EDM of embedding dimension $r$ 
and let $Dw=e$. If $r=n-1$, then $D$ is spherical; and if $r \leq n-2$, then   
the following statements are equivalent:
\begin{enumerate}
\item $D$ is spherical of radius $\rho$.
\item $\rank (D) = r+ 1$.
\item $e^Tw > 0$ and $\rho^2 = 1/(2 e^T w)$. 
\item There exists a scalar $\beta$ such that $\beta E - D$ is
      positive semidefinite; moreover, $\beta=2 \rho^2$ is the smallest 
      such scalar.  
\end{enumerate}
\end{thm}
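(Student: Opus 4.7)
The plan is to exploit the factorization $D = b e^T + e b^T - 2 PP^T$, where $P \in \Rs^{n \times r}$ has rows $(p^i)^T$ and $b_i = \|p^i\|^2$. The master identity, obtained by expanding $b = 2Pc + (\rho^2 - \|c\|^2)e$ whenever $\|p^i - c\|^2 = \rho^2$ for every $i$, is
\[
\beta E - D \;=\; 2(P - e c^T)(P - e c^T)^T \;+\; (\beta - 2\rho^2)\, e e^T.
\]
The first assertion, $r = n-1 \Rightarrow D$ spherical, follows because $n$ affinely independent points in $\Rs^{n-1}$ are the vertices of a nondegenerate simplex and hence lie on a unique circumscribed sphere.

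For the equivalences under $r \leq n-2$, I would first handle $(1) \Leftrightarrow (2)$ by a rank count. Writing $D = [b\ e\ P]\, M\, [b\ e\ P]^T$ for a fixed invertible $(r+2) \times (r+2)$ matrix $M$, and translating the generating points so that $e \perp \mathrm{col}(P)$, one has $\mathrm{rank}(D) = \mathrm{rank}([b\ e\ P]) \in \{r+1, r+2\}$, with the value $r+1$ occurring precisely when $b \in \mathrm{col}(P) + \mathrm{span}(e)$; solving $b = 2Pc + \alpha e$ shows this last condition is equivalent to sphericality. The master identity gives $(1) \Rightarrow (4)$ with minimum $\beta = 2\rho^2$. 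For $(1) \Rightarrow (3)$, substituting the factored form of $D$ into $Dw = e$ and rearranging yields $(P - e c^T)(P - e c^T)^T w = (\rho^2 e^T w - \tfrac{1}{2}) e$; since the embedding dimension is $r$, the points $p^i - c$ affinely span $\Rs^r$ and therefore cannot lie on any affine hyperplane, so $e \notin \mathrm{col}(P - e c^T)$, which forces both sides to vanish and gives $e^T w = 1/(2 \rho^2) > 0$.

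The main obstacle will be the reverse implications that close the cycle. For $(3) \Rightarrow (4)$, my plan is an inertia/Schur-complement argument on the bordered matrix $\bigl(\begin{smallmatrix} D & e \\ e^T & s \end{smallmatrix}\bigr)$ with $s := e^T w > 0$: using $w$ itself to congruence-transform the bordered matrix shows it has the same inertia as $\mathrm{diag}(D, 0)$, while Schur-complementing out the scalar block $s$ produces $\mathrm{inertia}(D - (1/s)E) + (1, 0, 0)$; equating the two expressions and invoking the fact that every nonzero EDM has exactly one positive eigenvalue forces $(1/s) E - D$ to be positive semidefinite. For $(4) \Rightarrow (1)$ I would Cholesky-factor $\beta E - D = QQ^T$: the constraint $d_{ii} = 0$ forces every row $Q_i$ of $Q$ to satisfy $\|Q_i\|^2 = \beta$, and then expanding $\|Q_i - Q_j\|^2$ gives $d_{ij} = \tfrac{1}{2}\|Q_i - Q_j\|^2$, so $\{Q_i/\sqrt{2}\}$ realizes $D$ on the sphere of radius $\sqrt{\beta/2}$ centered at the origin. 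Combined with $\beta_{\min} = 2\rho^2$ from the master identity, this closes the cycle and pins down both the formula $\rho^2 = 1/(2 e^T w)$ in (3) and the minimality statement $\beta_{\min} = 2\rho^2$ in (4).
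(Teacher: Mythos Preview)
The paper does not give a proof of this theorem at all: it is quoted with attributions to Gower, Neumaier, and Tarazaga--Hayden--Wells and then used as a black box throughout Section~3. So there is no ``paper's own proof'' to compare against; your sketch stands on its own as a reconstruction of the classical argument from those references.

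On the merits, the outline is sound and essentially the standard route. Two places deserve a little more care. First, in your $(1)\Leftrightarrow(2)$ step you assert $\rank D=\rank\,[b\;\,e\;\,P]$ because $M$ is invertible; but for an \emph{indefinite} $M$ the identity $\rank(AMA^T)=\rank A$ can fail (take $A=\bigl(\begin{smallmatrix}1&0\\1&0\end{smallmatrix}\bigr)$, $M=\bigl(\begin{smallmatrix}0&1\\1&0\end{smallmatrix}\bigr)$). The conclusion is nevertheless correct here---after centering so that $P^Te=0$, one checks directly that $\operatorname{col}(D)=\operatorname{span}\{e\}\oplus\operatorname{col}(P)$ in the spherical case and $\operatorname{col}(D)=\operatorname{span}\{b,e\}\oplus\operatorname{col}(P)$ otherwise---but you should justify it that way rather than via the invertibility of $M$. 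Second, in $(4)\Rightarrow(1)$ your Cholesky factorization exhibits a spherical realization of radius $\sqrt{\beta/2}$ for \emph{each} admissible $\beta$, not for the circumradius $\rho$; the identification $\beta_{\min}=2\rho^2$ is what pins $\rho$ down, and the cleanest way to get the inequality $\beta\ge 2\rho^2$ is to evaluate $w^T(\beta E-D)w=e^Tw(\beta\,e^Tw-1)\ge 0$ using (3). With those two patches the argument is complete.
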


\begin{thm} [Gower  \cite{gow82,gow85}] \label{nsEDM}   
Let $D$ be a nonzero EDM of embedding dimension $r$ and let $Dw=e$.
Then the following statements are equivalent:
\begin{enumerate}
\item $D$ is nonspherical.
\item $\rank (D) = r+ 2$.
\item $e^Tw = 0$. 
\end{enumerate}
\end{thm}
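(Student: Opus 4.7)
The plan is to anchor everything to the standard EDM identity $D = b e^T + e b^T - 2B$, where $B := -\tfrac{1}{2} J D J$ with $J = I - \tfrac{1}{n} e e^T$, and $b := \diag(B)$. Standard facts give that $B$ is positive semidefinite, $\rank(B) = r$ (the embedding dimension), and $Be = \bz$; in particular $\C(B) \subseteq e^\perp$. I split $b = \bar{b}\,e + b_1$ with $b_1 \perp e$. The first preparatory step is a geometric characterization: $D$ is spherical iff $b_1 \in \C(B)$. This comes from expanding $\|p^i - c\|^2 = \rho^2$ after centering, which rearranges to $b_1 = 2 P(c - \bar p)$, where $P$ is the matrix of centered generating points; since $\C(P) = \C(B)$, this is precisely the condition.

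For (1) $\Leftrightarrow$ (3) I substitute the identity for $D$ into $Dw = e$ and separate the component along $e$ from the one in $e^\perp$, obtaining the two equations
\[
(e^T w)\, \bar b + b^T w = 1, \qquad (e^T w)\, b_1 = 2\,B w.
\]
If $e^T w \neq 0$, the second equation exhibits $b_1$ as an element of $\C(B)$, so $D$ is spherical. Conversely, if $e^T w = 0$, then $B w = \bz$, i.e.\ $w \in \N(B)$, and the first equation collapses to $b_1^T w = 1$; this is incompatible with $b_1 \in \C(B)$ (which would force $b_1^T w = 0$), so $D$ must be nonspherical. This yields (1) $\Leftrightarrow$ (3).

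For (1) $\Leftrightarrow$ (2) I apply the same decomposition to $D x = \bz$, obtaining
\[
(e^T x)\, \bar b + b^T x = 0, \qquad (e^T x)\, b_1 = 2\, B x.
\]
When $D$ is nonspherical, $b_1 \notin \C(B)$ forces $e^T x = 0$; then $B x = \bz$ and (using $e^T x = 0$) $b^T x = b_1^T x = 0$. Thus $\N(D) = \{x \in \N(B) \cap e^\perp : b_1^T x = 0\}$. Decomposing $b_1$ via the orthogonal splitting $e^\perp = \C(B) \oplus (\N(B) \cap e^\perp)$, its component in $\N(B) \cap e^\perp$ is nonzero precisely because $D$ is nonspherical, so the functional $x \mapsto b_1^T x$ is not identically zero on $\N(B) \cap e^\perp$, and its kernel there has dimension $(n-1-r)-1$. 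Hence $\dim \N(D) = n - r - 2$ and $\rank(D) = r + 2$. The converse (2) $\Rightarrow$ (1) is immediate from what is already available: if $D$ is spherical then Theorem \ref{sEDM} (item 2) gives $\rank(D) = r+1$ when $r \leq n-2$, while if $r = n-1$ then $\rank(D) \leq n < r+2$; either way $\rank(D) \neq r+2$.

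The main obstacle is the geometric lemma identifying sphericity with $b_1 \in \C(B)$; once that is in hand, the equation splitting and the null-space dimension count are routine linear algebra. A subtle point in the spherical direction of (1) $\Leftrightarrow$ (2) is to obtain rank exactly $r+1$ rather than $r$, which secretly relies on the sphere having positive radius; delegating it to Theorem \ref{sEDM} rather than reproving it is the cleanest route.
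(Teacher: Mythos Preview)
The paper does not supply its own proof of this theorem: it is stated with attribution to Gower and cited from \cite{gow82,gow85}, so there is no in-paper argument to compare against. Your proof is correct and follows the standard Schoenberg--Gower route: represent $D$ via the centered Gram matrix $B=-\tfrac12 JDJ$ through $D=be^T+eb^T-2B$, characterize sphericity as $b_1\in\C(B)$, and then read off both the value of $e^Tw$ and the rank of $D$ by splitting the equations $Dw=e$ and $Dx=\bz$ into their $e$- and $e^\perp$-components. The dimension count for $\N(D)$ in the nonspherical case via the orthogonal decomposition $e^\perp=\C(B)\oplus(\N(B)\cap e^\perp)$ is clean, and invoking Theorem~\ref{sEDM} for the converse (2)$\Rightarrow$(1) is appropriate since that result is also quoted without proof in the paper. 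One small point you might make explicit is that $e^Tw$ is independent of the choice of solution $w$ to $Dw=e$ (because any two solutions differ by an element of $\N(D)\perp\C(D)\ni e$), so that statement~(3) is well posed; this is implicit in the theorem's phrasing but worth a sentence.
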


The following lemmas will be needed when dealing  
with the Strong Arnold Property.

\begin{lem} \label{DMlem}
Let $D$ be an EDM and let $M=E-D$. Then $\N(D) \subseteq \N(M)$.
\end{lem}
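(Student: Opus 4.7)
The plan is to unpack the conclusion $\N(D)\subseteq \N(M)$ directly from the definition of $M=E-D$. Given any $x \in \N(D)$, I have $Mx = Ex - Dx = Ex$, so the entire content of the lemma reduces to showing that $Ex=\bz$. Since $E=ee^T$, this is equivalent to proving the single scalar identity $e^Tx=0$; that is, every vector in the null space of $D$ is orthogonal to $e$.

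To establish $e^Tx=0$, the key tool is the property recalled just before the lemma: for any nonzero EDM $D$ there exists a vector $w$ with $Dw=e$. Taking such a $w$ and using the symmetry of $D$, I would compute
\[
e^T x \;=\; (Dw)^T x \;=\; w^T D^T x \;=\; w^T(Dx) \;=\; 0,
\]
since $x\in\N(D)$. Therefore $Ex = e(e^Tx)=\bz$, so $Mx = Ex-Dx = \bz$, as desired. The degenerate case $D=\bz$ can be set aside (it is the only case in which $e\notin \C(D)$, and it corresponds to all generating points coinciding, which will not arise in the intended application to penny graph realizations with $n\geq 5$).

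There is no real obstacle here; the lemma is a one-line consequence of symmetry of $D$ together with $e\in\C(D)$. The only conceptual point worth flagging is that the property $e\in\C(D)$ is exactly what promotes the trivial inclusion $\N(D)\subseteq\N(D)$ to the nontrivial inclusion $\N(D)\subseteq\N(E-D)$, since it forces null vectors of $D$ to annihilate $E$ as well.
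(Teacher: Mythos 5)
Your proof is correct and follows essentially the same route as the paper: both reduce $Mx=Ex$ for $x\in\N(D)$ and then use the fact that $e$ lies in the column space of $D$ (via $Dw=e$ and symmetry) to get $e^Tx=0$. Your explicit computation $e^Tx=(Dw)^Tx=w^TDx=0$ just spells out what the paper leaves implicit, and your remark about the degenerate case $D=\bz$ is a fair (if minor) caveat that the paper also glosses over.
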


\begin{proof}
Let $x \in \N(D)$. Then $Mx=e^Tx \; e$. But $e^T x = 0$ since $e$ lies
in the column space of $D$.
Therefore $x \in \N(M)$ and thus $\N(D) \subseteq \N(M)$.
\end{proof}

\begin{lem} \label{MDlem}
Let $D$ be an EDM and let $M=E-D$. Assume that $D$ is nonspherical or
spherical with radius $\rho \neq 1/\sqrt{2}$. 
Then $\N(M) \subseteq \N(D)$.
\end{lem}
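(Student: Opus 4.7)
The plan is to exploit the basic fact, recorded after Theorem~\ref{EDMthm}, that $e$ lies in the column space of every nonzero EDM, so there is a vector $w$ with $Dw = e$. (The case $D = \bz$ is trivial, since then $\N(D) = \Rs^n$.) For $x \in \N(M)$, the relation $Ex = Dx$ becomes
\[
Dx = (e^T x)\, e.
\]
Left-multiplying by $w^T$ and using $w^T D = (Dw)^T = e^T$ (by symmetry of $D$) produces the scalar identity
\[
(e^T x)\bigl(1 - e^T w\bigr) = 0.
\]

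The strategy is then to rule out $e^T w = 1$ under each hypothesis, so that $e^T x = 0$ and hence $Dx = \bz$, placing $x$ in $\N(D)$. If $D$ is nonspherical, Theorem~\ref{nsEDM}(3) gives $e^T w = 0$, and we are done. If $D$ is spherical with radius $\rho \ne 1/\sqrt{2}$, Theorem~\ref{sEDM}(3) gives $e^T w = 1/(2\rho^2)$, which equals $1$ precisely when $\rho^2 = 1/2$; the hypothesis excludes this case, so $e^T x = 0$ and thus $x \in \N(D)$.

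The main subtlety is the borderline spherical case of embedding dimension $r = n-1$, since Theorem~\ref{sEDM} asserts its full list of equivalences only for $r \le n-2$. In that situation $D$ is invertible of rank $n$, so $\N(D) = \{\bz\}$ and $w = D^{-1}e$ is unique; the relation $x = (e^T x)\,w$ obtained from $Dx = (e^T x)e$ together with the scalar identity above forces $x = \bz$ as soon as $e^T w \ne 1$. The remaining identity $e^T D^{-1} e = 1/(2\rho^2)$ can be extracted from part~(4) of Theorem~\ref{sEDM}, which identifies $2\rho^2 E - D$ as the tightest PSD majorant of $D$, so $\rho \ne 1/\sqrt{2}$ again yields $e^T w \ne 1$. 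The exclusion $\rho = 1/\sqrt{2}$ is sharp, as seen from the two-point EDM with $d = \sqrt{2}$: there $\N(M) = \mathrm{span}\{e\}$ while $\N(D) = \{\bz\}$, showing that the hypothesis on $\rho$ cannot be relaxed.
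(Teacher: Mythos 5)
Your proof is correct and follows essentially the same route as the paper: from $Dx=(e^Tx)e$ and $Dw=e$ you derive $(e^Tx)(1-e^Tw)=0$ and then invoke Theorem~\ref{nsEDM}(3) in the nonspherical case and Theorem~\ref{sEDM}(3) in the spherical case to exclude $e^Tw=1$. Your extra treatment of the borderline $r=n-1$ case (where the paper silently applies part~3 of Theorem~\ref{sEDM} outside its stated range $r\le n-2$) is a welcome refinement rather than a different argument; only note that in your sharpness example the matrix entry should be $d_{12}=2$ (squared distance), not $\sqrt{2}$, under the paper's convention.
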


\begin{proof}
Let $x \in \N(M)$. Then $Dx=e^Tx \; e$. Let $Dw=e$. Then
$w^T D x = e^Tx \; e^Tw$. Hence, $e^Tx(1-e^Tw)=0$.
Now if $D$ is nonspherical, then, it follows from  Theorem \ref{nsEDM}
that $e^Tw=0$ and hence $e^T x = 0$. 
On the other hand, if  
$D$ is spherical with $\rho^2 \neq 1/2$, then it follows from
part 3 of Theorem \ref{sEDM} that $e^Tw \neq 1$ and hence again $e^T x =0$. 
Consequently, $Dx=0$ and thus $\N(M) \subseteq \N(D)$.
\end{proof}

The following corollary immediately follows from 
Lemmas \ref{DMlem} and \ref{MDlem} and  Lemma \ref{rholem}. 

\begin{cor} \label{NDNMcor}
Let $D$ be an $n \times n$ EDM, where $n \geq 5$, 
and let $M=E-D$. Then $\N(M)=\N(D)$.
\end{cor}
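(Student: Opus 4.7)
The plan is to combine the two inclusions furnished by Lemmas \ref{DMlem} and \ref{MDlem}, using Lemma \ref{rholem} to dispose of the one exceptional case left open by Lemma \ref{MDlem}. Note that the corollary is being invoked in the context of a realization of a penny graph on $n \geq 5$ nodes, so $D$ here is the EDM whose generating points are exactly those centers $p^1,\ldots,p^n$.

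First I would observe that the inclusion $\N(D) \subseteq \N(M)$ is immediate from Lemma \ref{DMlem}, which requires nothing beyond $D$ being an EDM, so one direction of the set equality comes for free.

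For the reverse inclusion $\N(M) \subseteq \N(D)$, I would split into cases based on whether $D$ is spherical. If $D$ is nonspherical, Lemma \ref{MDlem} applies directly. If instead $D$ is spherical with radius $\rho$, then its generating points lie on a circle in the plane (since a penny graph is realized in $\Rs^2$, a sphere in the ambient affine hull reduces to a circle), and Lemma \ref{rholem} gives $\rho^2 > 1/2$. In particular $\rho \neq 1/\sqrt{2}$, so the hypotheses of Lemma \ref{MDlem} are again met and the inclusion follows.

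There is essentially no obstacle here: the corollary is a bookkeeping step that was engineered by the earlier lemmas. The only substantive observation required is that Lemma \ref{rholem} was crafted exactly to avoid the single radius $\rho = 1/\sqrt{2}$ that Lemma \ref{MDlem} has to exclude, so under the standing assumption $n \geq 5$ the spherical case cannot escape the conclusion. Combining the two inclusions yields $\N(M) = \N(D)$.
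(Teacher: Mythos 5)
Your proof is correct and is essentially the paper's own argument: the paper simply states that the corollary ``immediately follows'' from Lemmas \ref{DMlem}, \ref{MDlem} and \ref{rholem}, and you have spelled out exactly that combination, with Lemma \ref{rholem} ruling out the excluded radius $\rho = 1/\sqrt{2}$ in the spherical case. You were also right to note that the statement implicitly relies on $D$ arising from a penny-graph realization (so that Lemma \ref{rholem} applies); for a completely arbitrary EDM with $n \geq 5$ the spherical case with $\rho^2 = 1/2$ cannot be excluded.
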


\begin{lem} \label{3lilem} 
Let $D$ be an $n \times n$ EDM, where $n \geq 3$, and assume that 
all of its offdiagonal entries are positive. 
Then any 3 columns of $D$ are linearly independent.  
\end{lem}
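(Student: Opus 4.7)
The plan is to reduce the question to a $3\times 3$ determinant computation by restricting attention to the three rows indexed by the columns under consideration.

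Specifically, suppose for contradiction that three columns $D_i,D_j,D_k$ (with $i,j,k$ distinct) are linearly dependent, so $\alpha D_i+\beta D_j+\gamma D_k=\bz$ for some $(\alpha,\beta,\gamma)\neq (0,0,0)$. Reading off the entries in rows $i$, $j$, $k$, and using that $D$ is symmetric with zero diagonal, I would obtain the homogeneous $3\times 3$ system
\[
\begin{pmatrix} 0 & d_{ij} & d_{ik} \\ d_{ij} & 0 & d_{jk} \\ d_{ik} & d_{jk} & 0 \end{pmatrix}\!\begin{pmatrix}\alpha\\ \beta\\ \gamma\end{pmatrix}=\bz.
\]

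The next step is simply to expand the determinant of the coefficient matrix, which is $2\,d_{ij}d_{ik}d_{jk}$. Since every off-diagonal entry of $D$ is strictly positive by hypothesis, this determinant is strictly positive, forcing $\alpha=\beta=\gamma=0$ and yielding the desired contradiction.

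There is essentially no obstacle here; the only thing worth being careful about is extracting the right principal submatrix and remembering that the diagonal entries of $D$ vanish (so the indicated $0$'s on the diagonal of the $3\times 3$ block are genuine). No EDM-specific machinery, spherical/nonspherical distinction, or SAP considerations are needed for this lemma — positivity of the off-diagonal entries together with symmetry and zero diagonal is all that the argument uses.
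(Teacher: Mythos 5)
Your proof is correct, and it takes a genuinely different and more elementary route than the paper's. Both arguments rest on the same initial reduction: a dependence relation $\alpha D_i+\beta D_j+\gamma D_k=\bz$ among full columns, read off in rows $i,j,k$, becomes a dependence relation for the $3\times 3$ principal submatrix $\tilde D$ indexed by $\{i,j,k\}$, so it suffices to show $\tilde D$ is nonsingular. From there the paper stays inside the EDM framework: it notes that the generating points of $\tilde D$ are distinct (since the off-diagonal entries are positive), and then invokes the rank characterizations of spherical and nonspherical EDMs (Theorems \ref{sEDM} and \ref{nsEDM}) to conclude $\rank(\tilde D)=3$ whether the three points are collinear or not. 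You instead just compute $\det\tilde D = 2\,d_{ij}d_{ik}d_{jk}>0$ directly. Your computation is shorter, avoids the case split, and in fact proves a strictly more general statement: any symmetric matrix with zero diagonal and positive off-diagonal entries has every $3\times 3$ principal submatrix nonsingular, hence any three of its columns linearly independent --- the EDM hypothesis is never used. What the paper's argument buys in exchange is consistency with the machinery already set up (and a template that could be pushed to larger principal submatrices via the embedding-dimension/rank relations, where no closed-form determinant is available); for the $3\times 3$ case your determinant is the cleaner proof. One tiny point of precision: what you need from the determinant is only that it is nonzero, and ``strictly positive'' is a harmless strengthening.
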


\begin{proof}
By way of contradiction, assume that the columns $D_{i_1}, D_{i_2}$ 
$D_{i_3}$ of $D$ are linearly dependent, and let $\tilde{D}$ denote the  
$3 \times 3$ principal submatrix of $D$ induced by 
the indices $\{i_1,i_2,i_3\}$. Then the EDM 
$\tilde{D}$ is singular. Let $p^{i_1}, p^{i_2}, p^{i_3}$ denote the
generating points of $\tilde{D}$. Then, by our assumption,   
$p^{i_1}, p^{i_2}, p^{i_3}$ are distinct.  

Now if $p^{i_1}, p^{i_2}, p^{i_3}$ are collinear, then 
$\tilde{D}$ is nonspherical of embedding dimension 1, and hence,
by Theorem \ref{nsEDM}, $\rank(\tilde{D})=3$, a contradiction.  
On the other hand, if $p^{i_1}, p^{i_2}, p^{i_3}$ 
are not collinear, then 
$\tilde{D}$ has embedding dimension 2, and hence,
$\tilde{D}$ is spherical. Therefore, by   
Theorem \ref{sEDM}, $\rank(\tilde{D})=3$, also a contradiction.  
Hence, the result follows. 
\end{proof}

\section{Proof of Theorem \ref{mainthm}}

To prove Theorem \ref{mainthm}, it suffices to exhibit 
a Colin de Verdi\`{e}re matrix
of the complement graph $\G$ of the appropriate corank. To this end, 
every  realization $\{p^1,\ldots,p^n\}$ of a penny graph $G$
defines an EDM $D = (d_{ij} = ||p^i - p^j||^2)$.  
Hence, by Equations (\ref{defE}) and (\ref{defEb}), it immediately follows 
that all the offdiagonal entries of $D$ are $\geq 1$. To be more precise, 
let 
\beq \label{defM}
M = E - D, 
\eeq   
then 
\beq
M_{ij} 
\left\{ \begin{array}{ll} = 1 & \mbox{ if } i=j, \\   
                         =  0 & \mbox{ for all } \{i,j\} \in E(G), \\  
                           < 0 & \mbox{ for all } \{i,j\} \in E(\G). 
         \end{array} \right. 
\eeq 
Therefore, 
$M$ is a well-signed $\G$-matrix, and thus $M$ satisfies Condition
M1 of Definition \ref{defCdV} for the complement graph $\G$. 
To prove that $M$ is the desired Colin de Verdi\`{e}re matrix for 
$\G$, we need to show that matrix $M$ also satisfies 
Conditions M2 and M3 for $\G$. This we do next.

\subsection{Proof that $M$ Satisfies Condition M2}

It follows from Theorem \ref{EDMthm} that 
$V^T (-D) V =  V^T M V$ is an $(n-1) \times (n-1)$
positive semidefinite matrix of rank 2. 
Let $Q=[e/\sqrt{n} \;\; V]$. Then matrices $M$ and 
\[
Q^T M Q = \left[ \begin{array}{cc} 
                      e^TM e/n & e^TMV/ \sqrt{n} \\
                      V^TM e/ \sqrt{n} & V^T M V 
           \end{array} \right] 
\] 
are similar. Let $\lambda_1$ and $\lambda_2$ denote, respectively,
 the smallest and the second smallest eigenvalues of $M$. 
Thus, it follows from the eigenvalue interlacing theorem that 
$\lambda_2 = 0$ and $\lambda_1 \leq 0$ since $V^TMV$ is positive semidefinite   
of rank 2.  Recall that we assume that $n \geq 5$. 

Now if $\lambda_1=0$, then $M$ is positive semidefinite, then 
it follows from Theorem \ref{EDMthm} and part 4 of Theorem \ref{sEDM} that
$D$ is a spherical EDM with radius $\rho \leq 1/\sqrt{2}$. This
contradicts Lemma \ref{rholem} since we assume that $n \geq 5$. 
Therefore, $\lambda_1 < 0$ and thus $M$ satisfies Condition M2.

\subsection{Proof that $M$ Satisfies Condition M3}

Let $X$ be a $G$-matrix whose diagonal entries are all 0's and
let $MX=\bz$. Then it follows from Corollary \ref{NDNMcor} that
$DX=\bz$. Therefore, 
\beq \label{nsys} 
 \sum_{j: j \in N(i)} x_{ij}  D_j = \bz \mbox{ for all } i=1,\ldots, n,
\eeq 
where $N(i)$ denotes the set of nodes of $G$ that are adjacent to node $i$, 
and $D_j$ denotes the $j$th column of $D$. 
Now Propositions \ref{prop1} and \ref{prop2} imply that $G$ can be reduced
to a single node by the successive removal of nodes of degree at most 3.  
Therefore, by solving the $n$ systems of equations 
of (\ref{nsys}) in the same order as that of removing these nodes, 
we obtain that  
$X=\bz$ since, by Lemma \ref{3lilem}, any 3 columns of $D$ are linearly 
independent. Consequently, $M$ satisfies Condition M3 and as a result,
$M$ is a Colin de Verdi\`{e}re matrix of $\G$. 

\subsection{Establishing the Corank of $M$} 

Assume that $G$ is a path, a disjoint union of paths, or a cycle.  
Then $G$ has a realization whose corresponding EDM $D$ is spherical.
Then, by part 2 of Theorem \ref{sEDM} and Lemmas \ref{DMlem} and \ref{MDlem}, 
$\rank(M)=\rank(D)=3$. Hence, corank($M$) = $n-3$ and thus
$\mu(\G) \geq n-3$. Note that, by Lemma \ref{Gblem}, $\G$ is connected. 

Now assume that $G$ is not a path, a disjoint union of paths, or a cycle.  
Then $G$ has a realization whose corresponding EDM $D$ is nonspherical.
Assume that $\G$ is connected. Then, by part 2 of Theorem \ref{nsEDM},
$\rank(M) = 4$. Hence, corank($M$) = $n-4$ and thus $\mu(\G) \geq n-4$.
On the other hand, if $\G$ is not connected, then by Lemma \ref{Gblem},
$G$ consists of one isolated node and one connected component with $n-1$
nodes. Again, by Lemma \ref{Gblem}, this connected component has
a realization whose corresponding $(n-1) \times (n-1)$ EDM $D'$ is 
spherical. Thus, matrix $M' = E'- D'$ has   
rank $3$, where $E'$ is the matrix of all 1's of order $n-1$. 
Hence, corank($M'$)= $n-1-3=n-4$. Again $\mu(\G) \geq n-4$.




\end{document}